\documentclass[12pt,a4paper]{article}
\usepackage{graphicx}
\usepackage{amsmath,amscd,amssymb,latexsym,upref} %stmaryrd
\usepackage{color,enumitem}
\usepackage{bm}
\usepackage{url}

% - XXL ---------------

\newcommand{\Hil}[0]{
\mathcal{H} 
}
\newcommand{\norm}[2]{
\left\| #2 \right\|_{#1}
}

\newcommand{\NN}[0]{
{\mathbb{N}}
}

\newcommand{\RRd}[0]{
{\mathbb{R}^d}
}
\newcommand{\BL}[0]{
{\mathcal B}
}

\renewcommand{\d}{\operatorname{d}\!}
\newtheorem{theorem}{Theorem}[section]
\newtheorem{definition}{Definition}[section]

\newtheorem{conj.}[theorem]{Conjecture}
\newtheorem{Bsp.}{Example}[section]

\newenvironment{proof}{\noindent \bf Proof: \rm}{$ \hspace{\stretch{1}} \Box $

\vspace{5mm}}

\newcommand{\Mat}{\mathcal M}

\newcommand{\range}[1]{\operatorname{ran}\left( #1 \right)} %sf rm tt
\newcommand{\identity}[1]{\operatorname{id}_{ #1 }}
\newcommand{\kernel}[1]{\ker\left( #1 \right)}

\newcommand{\diag}[1]{{\operatorname{diag}} \left( #1 \right)}

%----------------------

%% NuHag

%\def\Ysp{{\boldsymbol Y}}

%\def\CooY{{\Coo{\Ysp}}}

\definecolor{darkviolet}{rgb}{0.58,0,0.83} %{148,0,211}

\setcounter{secnumdepth}{3}% numbering depth
\unitlength1mm
\sloppy

% \date{}

\title{Frames for the solution of operator equations in Hilbert 
spaces with fixed dual pairing} 
\author{Peter Balazs$^\star$ and Helmut Harbrecht$^\dagger$}

\begin{document}

\maketitle
\thanks{${}^\star$Austrian Academy of Sciences, Acoustics Research Institute, 
Reichsratsstrasse 17, 1010 Vienna, Austria, Peter.Balazs@oeaw.ac.at, \url{https://www.kfs.oeaw.ac.at/balazs}, \url{https://orcid.org/0000-0003-4939-0831};

${}^\dagger$University of Basel, Department of Mathematics and Computer Science, 
Spiegelgasse 1, 4051 Basel, Switzerland, Helmut.Harbrecht@unibas.ch, \url{cm.dmi.unibas.ch}, \url{https://orcid.org/0000-0003-0093-5706}}

\begin{abstract} For the solution of operator equations, 
Stevenson introduced a definition 
of frames, where a Hilbert space and its dual are {\em not} 
identified. This means that the Riesz isomorphism is not used 
as an identification, which, for example, does not make sense 
for the Sobolev spaces $H_0^1(\Omega)$ and 
$H^{-1}(\Omega)$. In this article, we are going to 
revisit the concept of Stevenson frames and introduce 
it for Banach spaces. This is equivalent to $\ell^2$-Banach 
frames. It is known that, if such a system exists, by defining 
a new inner product and using the Riesz isomorphism, the 
Banach space is isomorphic to a Hilbert space. In this article, 
we deal with the contrasting setting, where $\Hil$ and $\Hil'$ 
are not identified, and equivalent norms are distinguished, 
and show that in this setting the investigation of $\ell^2$-Banach 
frames make sense.
\\[2mm]
{\bf Keywords:} Frames, Banach frames, Stevenson frames, 
matrix representation, discretization of operators, invertibility.
\end{abstract}

%============================================
\section{Introduction}
%============================================
The standard definition of frames found first in the paper 
by Duffin and Schaefer \cite{duffschaef1} is the following:
\begin{equation} \label{sec:fram1}
\norm{\Hil}{f} \approx \norm{\ell^2}{\left< f , \psi_k \right>_{\Hil}} 
\text{ for all } f \in \Hil.  \end{equation}
Here, $x \approx y$ means that there are constants 
$0 < A \le B < \infty$ such that $A \cdot x \le y \le B \cdot x$.

This concept led to a lot of theoretical work, see e.g.~\cite{al95,behe90,%
ole1n,pesmha17,pjilstoev14}, but has been used also extensively in 
signal processing \cite{boelc1}, quantum mechanics \cite{Gaz09}, 
acoustics \cite{framepsycho16} and various other fields. 

Frames can be used also to represent operators.  
For the numerical solution of operator equations, the (Petrov-) Galerkin 
scheme  \cite{sauter2010boundary} is used, where operators are represented 
by $\left< O \psi_k , \phi_l \right>_{k,l\in K}$, called the \emph{stiffness} or \emph{system matrix}. 
The collection $\Psi = \left(\psi_k\right)_{k \in K}$ consists of 
the ansatz functions, the collection $\Phi = \left(\phi_k\right)_{k\in K}$ 
are the test functions. If $\Psi$ and $\Phi$ live in the same space, 
this is called Galerkin scheme, otherwise it is called Petrov-Galerkin 
scheme.

In finite and boundary element approaches, bases were used 
\cite{DahSch99a,Gauletal03}, but also frames have been applied, 
e.g.\ in \cite{gri94,groobe15,harbr08,os94,Stevenson03}. Recently, 
such operator representations got also a more theoretical treatment 
\cite{xxlframoper1,xxlgro14,xxlriek11}.

In numerical applications, it is often advantageous to have self-adjoint 
matrices, e.g.\ for Krylov subspace methods, which necessitates to use 
the same sequence for the discretization at both sides, i.e.\ investigating 
$\left< O \psi_k , \psi_l \right>_{k,l \in K}$. Note that this matrix is 
self-adjoint if $O$ is, and semi-positive if $O$ is. Positivity is in 
general not preserved, only if a system without redundancy is 
used, i.e.\ a Riesz sequence. Partial differential operators 
are typically operators of the form $O: \Hil \rightarrow \Hil'$, 
while boundary integral operators might also be smoothing 
operators which map in accordance with $O: \Hil' \rightarrow \Hil$. 
One possible solution is to work with Gelfand triples i.e, 
$\Hil \subset \Hil_0 \subset \mathcal\Hil'$. This is 
explicitly done for the concept of Gelfand frames \cite{dafora05}. 

Another possibility 
is the following, introduced by Stevenson in 
\cite{Stevenson03} and used e.g.\ in \cite{harbr08}:
A collection $\Psi = \left(\psi_k\right)_{k \in K} \subset \Hil$ 
is called a (Stevenson) frame for $\Hil$, if
\begin{equation} \label{eq:stevframintr1}
\norm{\Hil'}{f} \approx \norm{\ell^2}{\left< f , \psi_k \right>_{\Hil',\Hil}}
\ \text{for all}\ f \in \Hil'.  
\end{equation} 
Note the difference to the definition \eqref{sec:fram1} by Duffin 
and Schaefer, which is significant only if the Riesz isomorphism 
is not employed. Here, the Gelfand triple is only implicitly used and, if the fully 
general setting is used, the density of the spaces is not required.

Clearly, the definitions \eqref{sec:fram1} and \eqref{eq:stevframintr1} are equivalent by the Riesz isomorphism.
On the other hand, if the isomorphism $\Hil \cong \Hil'$ is not 
considered, but another one is utilized, for example, considering 
the triple $\Hil \subset \Hil_0 \subset \Hil'$, then the Riesz 
isomorphism is usually used as an identification on the pivot 
space $\Hil_0 \cong  \Hil_0'$ , and therefore $\Hil$ and $\Hil'$ cannot 
be considered to be equal.

In this article, we consider the original definition by Stevenson and 
re-investigate in full detail all the derivation to ensure that the Riesz 
identification does not 'creep in' again.

On a more theoretical level, let us consider Banach frames 
\cite{Casazza2005710,chst03,gr91}. Thus, we consider a Banach 
space $X$, a sequence space $X_d$, and a sequence 
$\Psi \subset X'$. This is a $X_d$-frame if 
$$ 
\norm{X}{f} \approx \norm{X_d}{\left< f , \psi_k \right>_{X,X'}} 
\ \text{for all}\ f \in X.
$$
It is called a Banach frame if a reconstruction operator exists, 
i.e.\ there exists $R: X_d \rightarrow X$ with $R \left( \psi_k(f) \right) = f$ 
for all $f \in X$.

In this setting, $\ell_2$-frames were not considered to be interesting as they are 
isomorphic to Hilbert frames, see e.g.\ \cite[Proposition 3.10]{stoev09}: 
Let $\Psi$ be a $\ell^2$-frame for $X$. Then, $X$ can be equipped 
with an inner product $\left<f,g\right>_{X}=\left< C_\Psi f, C_\Psi g 
\right>_{\ell^2}$, becoming a Hilbert space, and $\Psi$ is a (Hilbert) 
frame for $X$. The proof uses the Riesz isomorphism $\Hil \cong \Hil'$ 
in the last line. But if a context is considered, where this isomorphism 
cannot be applied, like for example a Gelfand triple setting, suddenly 
the concept of $\ell_2$-frames might become non-trivial again, and 
the concept of Stevenson frames is different to a frame. In this 
article, we investigate this approach.

The rest of this article is structured as follows. In Section~\ref{sec:solvopeq0},
we motivate Gelfand triples $\mathcal{H}'\subset\mathcal{H}_0\subset
\mathcal{H}$ by a simple example arising from the variational formulation
of second order elliptic partial differential equations. Section~\ref{sec:intronot0}
then provides the main ingredients we need, especially it introduces the 
different notions of frames for solving operator equations. By an illustrative 
example, we show that Stevenson frames seem to offer the most flexible 
concept for the discretization of operator equations. Finally, in Section
\ref{sec:revisited}, we generalize Stevenson frames to Banach 
spaces and discuss the consequences.

%========================================
\section{Motivation: Solving Operator Equations} \label{sec:solvopeq0}
%========================================
Let $O : \Hil \rightarrow \Hil'$ and define the bilinear form 
$a : \Hil \times \Hil \rightarrow \mathbb{R}$ by $a(u,v) = 
\left< Ou , v \right>$. Assume that $a$ satisfies the following
properties:
\begin{enumerate}
\item Let $a$ be bounded, i.e.\ there is a constant $C_S$, such that
$$ a(u,v) \le C_S \cdot \norm{\Hil}{u} \norm{\Hil}{v}.$$ %  \text{(boundedness)}
This is equivalent to $O$ being bounded.
\item  Let $a$ be elliptic, i.e.\ there exists a constant $C_E$ 
such that 
$$ a(u,u) \ge C_E \cdot \norm{\Hil}{u}^2.$$
\end{enumerate}
Both conditions are equivalent to $O$ being bounded, 
boundedly invertible, and positive, see e.g.\ \cite{braess,brenner}.

The general goal is to find the solution $u\in\Hil$ such that
\begin{equation}
\label{sec:genweaksol1}
 a(u,v) = \ell(v)\ \text{for all}\ v \in \Hil.
\end{equation}
This is the weak formulation of the operator equation $O u = b$, 
setting $\ell(v) = \left< b , v \right>_{\Hil',\Hil}$ for $u \in \Hil$ and $b \in \Hil'$. 

In numerical approximation schemes, to get an approximate 
solution, finite dimensional subspaces $V\subset \Hil$ are 
considered and the solution $u_V\in V$ such that
\[
 a(u_V,v) = \ell(v)\ \text{for all}\ v \in V
\]
is calculated. The error between the continuous solution 
$u\in\Hil$ and the approximate solution $u_V\in V$ is 
orthogonal to the space $V$, which is known as the
Galerkin orthogonality: $a(u - u_V, v) = 0$ for all $v \in V$.
Note that, in difference to e.g.\ a Gelfand triple approach, 
the norms on $V$ and $\Hil$ are the same in the setting 
above. Instead, the Gelfand triple setting would be 
$\Hil\subset\Hil_0\subset\Hil'$ with $\norm{\Hil_0}{\cdot} 
\le c\norm{\Hil}{\cdot}$.

We shall illustrate the setting also by a practical example
from the theory of partial differential equations. To that end, 
assume that $\Omega$ is a bounded domain in $\mathbb{R}^d$
and let $\Hil_0 := L^2(\Omega)$ be the space of all square-integrable 
functions $v:\Omega\to\mathbb{R}$. As space $\Hil\subset\Hil_0$ we 
consider the Sobolov space $H_0^1(\Omega)$ which consists of all 
functions in $L^2(\Omega)$ whose first order week derivatives are 
also square-integrable and which are zero at the boundary
$\partial\Omega$. Thus, the variational formulation of 
the Poisson equation 
\[
  -\Delta u = f\ \text{in}\ \Omega, \quad
  u = 0\ \text{on}\ \partial \Omega
\]
reads
\begin{equation}\label{eq:var}
%================================
  \text{seek}\ u\in H_0^1(\Omega)\ \text{such that}\ 
  a(u,v) = \ell(v)\ \text{for all}\ v\in H_0^1(\Omega),
\end{equation}
compare \cite{braess} for example. The bilinear form 
\[
  a:H_0^1(\Omega)\times H_0^1(\Omega)\to\mathbb{R},
  \quad a(u,v) = \int_\Omega \nabla u \nabla v \d{\bf x}
\]
is continuous and elliptic due to Friedrichs' inequality,
cf.~\cite{braess}, and the linear form
\[
  \ell:H_0^1(\Omega)\to \mathbb{R},
  \quad \ell(v) = \int_\Omega f v \d{\bf x}
\]
is continuous provided that $f\in H^{-1}(\Omega)
= \big(H_0^1(\Omega)\big)'$. Hereby, the inner product 
in the pivot space $L^2(\Omega)$ is continuously extended 
onto the duality pairing $H^{-1}(\Omega) \times H_0^1(\Omega)$.
Hence, the underlying Gelfant triple is $H_0^1(\Omega)\subset 
L^2(\Omega)\subset H^{-1}(\Omega)$.

%============================================
\section{Main Definitions and Notations} \label{sec:intronot0}
%============================================
\subsection{Dual Pairs}
%============================================
Let $X,Y$ be vector spaces and $a(x,y)$ a bilinear functional 
on $X \times Y$. Then $(X,Y)$ is called a dual pair \cite{wern1}, if 
\begin{enumerate}
\item $\forall x \in X \backslash \{ 0 \}\ \exists y \in Y\ \text{s.t.}\ a(x,y) \not=0$,
\item $\forall y \in Y \backslash \{ 0 \}\ \exists x \in X\ \text{s.t.}\ a(x,y) \not=0$.
\end{enumerate}
In short, the notation $a(x,y) = \left< x ,y \right>_a = \left< x ,y \right>$ 
is used. A classical example is a Banach space $X$ and its dual space 
$X'$. But looking at other dual pairs allows to have an explicit form 
for the dual elements \cite{heufa}.

Note that often an isomorphism is considered as an identity. For 
example, by using the Riesz mapping $\Hil \cong \Hil'$, the dual
space $\Hil'$ is often identified with $\Hil$. If two or more isomorphisms 
are involved, this identification, of course, can only be considered 
for one of those isomorphisms. For example, if we consider two 
Hilbert spaces $\Hil_1\subset\Hil_2$, the Riesz isomorphism 
can be considered only for one of them to be an identification,
see also Section \ref{sec:gelffram0}.

%============================================
\subsection{Gelfand Triples}
%============================================
Let $X$ be a Banach space and $\Hil$ a  Hilbert space. 
Then, the triple $(X, \Hil, X')$ is called a Banach Gelfand 
triple \cite{cofelu08}, if $X \subset\Hil\subset X'$, where 
$X$ is dense in $\Hil$,  and $\Hil$ is $w^\star$-dense in 
$X'$. The prototype of such a triple is $(\ell^1, \ell^2, 
\ell^\infty)$ in case of sequence spaces.

Note that, even if we consider the spaces all being 
Hilbert spaces --such a sequence is also called rigged 
Hilbert spaces \cite{Antoine1998}-- the Riesz isomorphism, 
in general, is not just the composition of the inclusion with 
its adjoint. This depends on the chosen concrete dual pairing. 

As another example, consider the triple $H_0^1(\Omega)\subset 
L^2(\Omega)\subset H^{-1}(\Omega)$, which has been presented 
in the practical example for the Poisson equation in Section 
\ref{sec:solvopeq0}.

%============================================
\subsection{Frames} 
%============================================
A sequence $\Psi = \left( \psi_k\right)_{k\in K}$ in a separable 
Hilbert space $\mathcal{H}$ is a \emph{frame} for $\mathcal{H}$, 
if there exist positive constants $A_\Psi$ and $B_\Psi$ (called lower and 
upper frame bound, respectively) that satisfy
\begin{equation} \label{sec:frambasdef1}
A_\Psi \|f\|^2\leq\sum_{k\in K}|\langle f,\psi_k\rangle|^2 \leq
B_\Psi \|f\|^2\ \text{for all}\ f \in \mathcal{H} .
\end{equation}
An upper (resp.\ lower) semi-frame is a complete system 
that only satisfies the upper (resp.\ lower) frame inequality, see 
\cite{jpaxxl09,antbal12}. A frame where the two bounds can be 
chosen to be equal, i.e.\ $A_\Psi = B_\Psi$, is called {\em tight}. We 
will denote the corresponding sequences in $\Hil$ by $\Psi =(\psi_k)_{k\in K}$ 
and $\Phi = (\phi_k)_{k\in K}$ in the following, where we consider general 
discrete index sets $K \subset\RRd$. A sequence that is 
a frame for its closed linear span is called a frame sequence.

By $C_\Psi : \Hil \rightarrow \ell^2$ 
we denote the {\em analysis operator} defined by 
$\left(C_\Psi f\right)_k = \left< f , \psi_k\right>$. The
adjoint of $C_\Psi$ is the {\em synthesis operator} 
$D_\Psi (c_k) = \sum_k c_k \psi_k$. The {\em frame 
operator} $S_\Psi = D_\Psi C_\Psi$ can be written as 
$S_\Psi f = \sum_k \left< f , \psi_k\right> \psi_k$. It is 
positive and invertible. Note that those 'frame-related' 
operators can be defined as possibly unbounded 
operators for any sequence in the Hilbert space
\cite{xxlstoeant11}.

By using the {\em canonical dual frame} $(\tilde \psi_k)$, 
i.e., $\tilde \psi_k = S_\Psi^{-1} \psi_k$ for all $k$, we 
get a reconstruction formula: 
$$f = \sum \limits_k \left< f ,
\psi_k\right> \tilde \psi_k = \sum \limits_k \left< f , \tilde 
\psi_k\right> \psi_k\ \text{for all}\ f \in \Hil.$$

The {\em Gramian matrix} $G_{\Psi}$ is defined by 
$\left(G_{\Psi}\right)_{k,l} = \left< \psi_l, \psi_k \right>$, 
also called the mass matrix. This matrix defines an operator 
on $\ell^2$ by matrix multiplication, corresponding to $G_{\Psi} 
= C_\Psi D_\Psi$. Similarily, we can define the {\em cross-Gramian 
matrix} ${\left(G_{\Psi, \Phi}\right)}_{k,l} = \left< \phi_l ,\psi_k \right>$ 
between two different frames $\Phi$ and $\Psi$. Clearly, 
$$
G_{\Psi, \Phi} c 
= \sum \limits_l \left(G_{\Psi,\Phi}\right)_{k,l} c_l 
= \left< \sum \limits_l c_l \phi_l , \psi_k \right> = C_\Psi D_\Phi c.
$$

If, for the sequence $\Psi$,  there exist constants $A_\Psi,B_\Psi >0$ 
such that the inequalities
$$ 
A_\Psi \norm{2}{c}^2 \le \norm{\Hil}{\sum \limits_{k \in K} c_k \psi_k}^2 
\le B_\Psi \norm{2}{c}^2 
$$
are fulfilled, $\Psi$ is called a {\em Riesz sequence}. If $\Psi$ 
is complete, it is called a {\em Riesz basis}.

%============================================
\subsubsection{Banach Frames} \label{sec:banchfram0}
%============================================
The concept of frames can be extended to Banach spaces 
\cite{Casazza2005710,chst03,gr91}: 

Let $X$ be a Banach space and $X_d$ be a Banach 
space of scalar sequences. A sequence $(\psi_k)$ in the 
dual $X^\star$ is called an $X_d$-frame for the Banach space 
$X$ $(1<p<\infty)$, if there exist constants $A_\Psi, B_\Psi>0$
such that
$$ 
A_\Psi\|f\|_{X}\leq \norm{X_d}{\psi_{k}(f)}\leq
B_\Psi\|f\|_{X}\ \textrm{for all}\
f \in X.
$$

An $X_d$-frame is called a Banach frame with respect to 
a sequence space $X_d$, if there exists a bounded reconstruction 
operator $R : X_d \rightarrow X$, such that $R \left( \psi_k(f) \right) = f$ 
for all $f \in X$. In our setting, we use $p$-frames, that is $X_d = \ell^p$
for $1\le p \le\infty$, especially, we use $X_d = \ell^2$.

A family $(g_{k})_{k\in K}\subset X$ is called a 
{\em $q$-Riesz sequence} $(1\le q \le\infty)$
for $X$, if there exist constants $A_\Psi, B_\Psi>0$ such that 
\begin{equation}\label{qRiesz} A_\Psi
\Bigg(\sum_{k\in K}|d_{k}|^{q}\Bigg)^{\frac{1}{q}}
\leq
\norm{X}{\sum_{k\in K} d_{k}g_{k}}
\leq B_\Psi\Bigg(
\sum_{k\in
K}|d_{k}|^{q}\Bigg)^{\frac{1}{q}}
\end{equation}
for all finite scalar sequence $(d_{k})$. The family 
is called a $q$-Riesz basis if it fulfills (\ref{qRiesz}) 
and $\overline{\operatorname{span}}\{g_{k}:k\in K\}=X$.

Any $q$-Riesz basis for $X^\star$ is a $p$-frame for $X$, 
where $\frac{1}{p} + \frac{1}{q} = 1$, compare \cite{chst03}.

%============================================
\subsubsection{Gelfand Frames} \label{sec:gelffram0}
%============================================
A frame for $\Hil$ is called a {\em Gelfand frame} \cite{dafora05} 
for the Gelfand triple $(X, \Hil, X')$ if there exists a Gelfand triple 
of sequence spaces $(X_d,\ell^2, X_d')$, such that the synthesis 
operator $D_{\Psi} : X_d\to X$ and the analysis operator 
$C_{\widetilde \Psi} : X\to X_d$ are bounded. As a result, 
see \cite{dafora05,Kapp12}, this means that $\Psi$ is a 
Banach frame for $X_d$ and $\widetilde \Psi$ a Banach 
frame for $X_d'$. 

In many approaches, see e.g.\ \cite{dafora05}, it is 
assumed for the implementation that there exists an 
isomorphism $D_B:X_d\to\ell^2$. Should $X_d$ be 
non-reflexive, then it is also assumed that $D_B^\star$ 
is an isomorphism. 
If $D_B$ is a diagonal operator, i.e.\ $D_B = \diag{w_k}$
and $D_B^{-1} = \diag{\frac{1}{w_k}}$, then 
$\Psi = \big(\frac{1}{w_k} \psi_k\big)$ is a Hilbert 
frame for $X$ and $\big(w_k\tilde\psi_k \big)$ is a 
Hilbert frame for $X'$. This is 
shown for real weights in \cite{Werner}. It is easy to 
see also for complex weights when using a weighted 
frame viewpoint \cite{xxljpa1,stoevxxl09}. These cases 
cover the weighted spaces $\ell_w^2$.

The above setting can be generalized as follows: We 
define, similar to \cite{stoev09}, the sesqui-linear form 
$\left< f , g \right>_{X}^{o} := \left< D_B C_{\widetilde \Psi} f,  
D_B C_{\widetilde \Psi} g \right>_{\ell^2}$.  It is obviously
bounded and coercive, and, in particular, $\norm{X^o}{f} 
:= \sqrt{\left< f , f \right>_{X^o}}$ is equivalent to $\norm{X}{f}$. 
Therefore, $(X,\norm{X^o}{f})$ is a Hilbert space which is isomorphic 
to $(X,\norm{X}{f})$. Now let $\xi_l := D_\Psi D_B^{-1} \delta_l$, 
where $\delta_l$ is the standard basis in $\ell^2$. This is a 
Hilbert space frame for $X$. 
Similarly, $\eta_l := D_{\widetilde \Psi} \left( D_B^\star \right)^{-1} 
\delta_l$ is a Hilbert space frame for $X'$. As a consequence 
$X$ and $X'$ are Hilbert spaces, but $X \not= X'$ and 
the inner products and the corresponding norms are 
changed, albeit equivalent to the original ones.

%============================================
\subsection{Stevenson Frames}
%============================================
We consider the duality $(\Hil, \Hil' )$ without using the Riesz 
isomorphism. In particular, we use the duality with respect to a second 
Hilbert space $\Hil_0$.
\begin{definition}[\cite{Stevenson03}] \label{sec:Stevfram1}
A sequence $\Psi = \left( \psi_k \right)_{k \in K} \subset \Hil$ is 
called a (Stevenson) frame for $\mathcal{H}$ %(Stevenson) $\Hil'$-frame
%\textcolor{red}{Ich w\"urde (Stevenson) frame for $\mathcal{H}$ bevorzugen, so wie in Section 3.5.} 
if there exists constants 
$0 < A_\Psi \le B_\Psi < \infty$ such that  
\begin{equation}  \label{sec:eqstevfram1}
A_\Psi \cdot \norm{\Hil'}{f}^2 
\le \norm{\ell^2}{\left< f , \psi_k \right>_{\Hil',\Hil}} \le B_\Psi \cdot 
\norm{\Hil'}{f}^2 \ \text{for all}\ f \in \Hil'.
\end{equation}
\end{definition} 
Different to the Gelfand frames setting, we do not assume density. 

Typically, we consider Sobolev spaces and the $L^2$-inner 
product, which we can consider as co-orbit spaces with the 
sequence spaces  $\ell_w^2$ varying $w$. Here, invertible 
operators between different spaces exist, see Section 
\ref{sec:gelffram0}, and density is also given. In this article, 
we treat the most general setting.

In \cite{Stevenson03}, the author states ``We adapted the 
definition of a frame given in \cite[Section 3]{daubech1} by 
identifying $\Hil$ with its dual $\Hil'$ via the Riesz mapping''.  
Then, the following results are stated, also in \cite{harbr08}, 
without proofs:
The analysis operator $C_\Psi : \Hil' \rightarrow \ell^2$, 
$C_\Psi (f) = \left( \left< f, \psi_k \right> \right)_{k \in K}$ is 
bounded by \eqref{sec:eqstevfram1}, as is its adjoint 
$C_\Psi^\star : \ell^2 \rightarrow \Hil$. It can be easily shown 
that $C_\Psi^\star = D_\Psi$ is the synthesis operator with 
$D_\Psi c = \sum_{k \in K} c_k \psi_k$. Especially,
one has
$$ 
\ell^2 = \range{C_\Psi} \oplus \kernel{D_\Psi}.
$$

Define the frame operator $S_\Psi = D_\Psi C_\Psi$. It
is a mapping $S_\Psi : \Hil' \rightarrow \Hil$, which
is boundedly invertible. We can show that the sequence 
$\widetilde \Psi = \left( S_\Psi^{-1} \psi_k\right)_{k \in K}$
is a (Stevenson) $\Hil$-frame with bounds $\frac{1}{B_\Psi}$ 
and $\frac{1}{A_\Psi}$. Here, $C_{\widetilde \Psi} = C_\Psi 
S_\Psi^{-1}$ and $D_{\widetilde \Psi} = S_\Psi^{-1} D_\Psi$. 
Furthermore, it holds $S_{\widetilde \Psi} = S_\Psi^{-1}$ and, 
therefore, $S_{\widetilde \Psi} : \Hil \rightarrow \Hil'$.  

We have the reconstructions 
\begin{equation} \label{sec:reconPDE1}  
f = D_{\Psi} C_{\widetilde \Psi} h = \sum \limits_{k \in K} \left< f , \tilde \psi_k\right>_{\Hil,\Hil'} \psi_k ,
\end{equation}
and
\begin{equation} \label{sec:reconIO1}  
h = D_{\widetilde \Psi} C_\Psi h = \sum \limits_{k \in K} \left< h , \psi_k\right>_{\Hil',\Hil} \tilde \psi_k , 
\end{equation}
for all $f \in \Hil$ and $h \in \Hil'$.

The cross-Gramian matrix $G_{\Psi,\widetilde \Psi} = D_\Psi C_{\widetilde \Psi}$
is the orthogonal projection on $\range{C_\Psi}$ and coincides with 
$G_{\widetilde \Psi, \Psi}$. Therefore, $\range{C_\Psi} = \range{C_{\widetilde \Psi}}$. 

In this article, we are revisiting those statements, make them slightly 
more general, in order to make sure that not using the Riesz isomorphism is possible.

%============================================
\subsection{An Illustrative Example}
%============================================
Let $\Omega\subset\mathbb{R}^n$ be a sufficiently smooth,
bounded domain. We consider a multiscale analysis, i.e., a 
dense, nested sequence of finite dimensional subspaces 
\[
  V_0 \subset V_1 \subset \dots \subset V_j
  	\subset\dots\subset L^2(\Omega),
\]
consisting of piecewise polynomial ansatz functions 
$V_j = \operatorname{span}\{\varphi_{j,k}:k\in\Delta_j\}$,
such that $\dim V_j\sim 2^{jn}$ and
\[
L^2(\Omega) = \overline{\bigcup_{j\in\mathbb{N}_0}V_j},
  	\qquad V_0 = \bigcap_{j\in\mathbb{N}_0}V_j.
\]
One might think here of a multigrid decomposition of
standard Lagrangian finite element spaces or of a sequence 
of spline spaces originating from dyadic subdivision.

Trial spaces $V_j$ which are used for the Galerkin method 
satisfy typically a \emph{direct\/} or \emph{Jackson\/} estimate. 
This means that 
\begin{equation}	\label{eq:approx}
%========================================
  \|v-P_j v\|_{L^2(\Omega)} \le C_J 2^{-jq}\|v\|_{H^q(\Omega)},
		\quad v\in H^q(\Omega),
\end{equation}
holds for all $0\le q\le d$ uniformly in $j$. Here, $P_j:L^2(\Omega)\to 
V_j$ is the $L^2(\Omega)$-orthogonal projection onto the trial space 
$V_j$ and $H^q(\Omega)\subset L^2(\Omega)$, $q\ge 0$, denotes 
the Sobolev space of order $q$. The upper bound $d>0$ refers in general 
to the maximum order of the polynomials which can be represented in 
$V_j$, while the factor $2^{-j}$ refers to the mesh size of $V_j$, i.e., 
the diameter of the finite elements, compare \cite{braess} for example.

Besides the Jackson type estimate \eqref{eq:approx}, there
also holds the \emph{inverse\/} or \emph{Bernstein\/} estimate
\begin{equation}	\label{eq:inverse}
%=========================================
  \big\|P_j v\big\|_{H^q(\Omega)}\le C_B
  	2^{jq} \big\|P_j v\big\|_{L^2(\Omega)},\quad v\in H^q(\Omega),
\end{equation} 
for all $0\le q <\gamma$, where the upper bound
\[
  \gamma:=\sup\big\{t\in\mathbb{R}: V_j\subset H^t(\Omega)\big\} > 0
\]
refers to the regularity of the functions in the trial
spaces $V_j$.
There holds $\gamma = d-1/2$ for trial functions 
based on cardinal B-splines, since they are globally 
$C^{d-1}$-smooth, and $\gamma = 3/2$ for standard 
Lagrangian finite element shape functions, since they
are only globally continuous.

A crucial requirement is the uniform frame stability of the 
systems under consideration, i.e., the existence of constants 
$A_\Phi, B_\Phi >0$ such that
\begin{equation}\label{eq:Riesz}
%===============================
  A_\Phi\big\|P_j f\big\|_{L^2(\Omega)}^2
  	\le\sum_{k\in\Delta_j}|\langle f,\varphi_{j,k}\rangle|^2
	\le B_\Phi\big\|P_jf \big\|_{L^2(\Omega)}^2\ \text{for all}\ f\in L^2(\Omega)
\end{equation}
holds uniformly for all $j$. This stability is satisfied for
example by Lagrangian finite element basis functions 
defined on a multigrid hierarchy resulting from uniform 
refinement of a given coarse grid, see \cite{braess} for 
example. It is also satisfied by B-splines defined on 
a dyadic subdivision of the domain under consideration.

Having a multiscale analysis at hand, it can be used
for telescoping a given function to account for the fact 
that Sobolev norms act different on different length scales. 
Namely, the interplay of \eqref{eq:approx} and \eqref{eq:inverse} 
gives rise to the norm equivalence
\begin{equation}\label{eq:equivalence}
%======================================
  \|f\|_{\widetilde{H}^{-q}(\Omega)}^2\sim\sum_{j\in\mathbb{N}_0}
  	2^{-2jq}\big\|(P_j-P_{j-1})f\big\|_{L^2(\Omega)}^2
\end{equation}
for all $0\le q<\gamma$, where $P_{-1} := 0$ and 
$\widetilde{H}^{-q}(\Omega) := \big(H^q(\Omega)\big)'$ 
denotes the dual to $H^q(\Omega)$, see \cite{DA} for 
a proof. 

In accordance with \cite{harbr08}, using \eqref{eq:Riesz}, 
we can estimate 
\begin{align*}
  \sum_{j\in\mathbb{N}_0}\sum_{k\in\Delta_j} 2^{-2jq} |\langle f,\varphi_{j,k}\rangle|^2
   &\approx \sum_{j\in\mathbb{N}_0} 2^{-2jq}\big\|P_j f\big\|_{L^2(\Omega)}^2 \\
   &= \sum_{j\in\mathbb{N}_0}
  	2^{-2jq}\sum_{\ell=0}^j\big\|(P_\ell-P_{\ell-1})f\big\|_{L^2(\Omega)}^2 \\
   &= \sum_{\ell\in\mathbb{N}_0}\big\|(P_\ell-P_{\ell-1})f\big\|_{L^2(\Omega)}^2
  	\sum_{j=\ell}^\infty 2^{-2jq}.
\end{align*}
The latter sum converges provided that $q>0$ and we 
arrive at	
\[
  \sum_{j\in\mathbb{N}_0}\sum_{k\in\Delta_j} 2^{-2jq} |\langle f,\varphi_{j,k}\rangle|^2
  	\approx \sum_{\ell\in\mathbb{N}_0} 2^{-2\ell q}\big\|(P_\ell-P_{\ell-1})f\big\|_{L^2(\Omega)}^2.
\]
In view of the norm equivalence \eqref{eq:equivalence}, 
we have thus proven that there exist constants $A_\Phi,
B_\Phi > 0$ such that
\begin{equation}	\label{eq:frame}
%=======================================
  A_\Phi\|f\|_{\widetilde{H}^{-q}(\Omega)}^2
  	\le\sum_{j\in\mathbb{N}_0}\sum_{k\in\Delta_j} 2^{-2jq} |\langle f,\varphi_{j,k}\rangle|^2
  	\le B_\Phi\|f\|_{\widetilde{H}^{-q}(\Omega)}^2
\end{equation}
for all $0<q<\gamma$. Therefore, in accordance with 
Definition \ref{sec:Stevfram1}, the collection
\begin{equation}\label{eq:BPX}
%================================
\Phi=\{2^{-jq}\varphi_{j,k}:k\in\Delta_j, \,j\in\mathbb{N}_0\}
\end{equation}
defines a Stevenson frame for $\mathcal{H}=H^q(\Omega)$,
where $\mathcal{H}' = \widetilde{H}^{-q}(\Omega)$ with duality 
related to $\mathcal{H}_0=L^2(\Omega)$.
Notice that this frame underlies the construction of the so-called 
BPX preconditioner, see e.g.~\cite{BPX,DA,OS2}. Especially, 
by removing all basis functions which are associated with 
boundary nodes, one gets a Stevenson frame for $\mathcal{H} 
= H_0^1(\Omega)$, as required for the Galerkin discretization 
of elliptic partial differential equations, compare 
Section~\ref{sec:solvopeq0}.

We like to emphazise that the collection \eqref{eq:BPX} 
does not define a Gelfand frame, since \eqref{eq:frame}
does not hold in $\mathcal{H}_0 = L^2(\Omega)$, i.e., 
for $q = 0$. Hence, the concept of Stevenson frames seems 
to be more flexible than the concept of Gelfand frames.

%============================================
\subsection{Operator Representation in Frame Coordinates} 
\label{sec:matrrep0}
%============================================
For orthonormal sequences, it is well known that operators can 
be uniquely described by a matrix representation \cite{gohberg1}. 
The same can be constructed with frames and their duals, see 
\cite{xxlframoper1,xxlgro14}. 

Let $\Psi = (\psi_k)$ be a frame in $\Hil_1$ with bounds 
$A_\Psi,B_\Psi>0$, and let $\Phi = (\phi_k)$ be a frame in 
$\Hil_2$ with $A_\Phi,B_\Phi>0$.
\begin{enumerate} 
\item Let $O : \Hil_1 \rightarrow \Hil_2$ be a 
bounded, linear operator. Thus, the infinite matrix 
$$ 
{\left( {\Mat}^{(\Phi , \Psi)} \left( O \right) \right)}_{m,n} = 
\left<O \psi_n, \phi_m \right>$$% 
defines a bounded operator from $\ell^2$ to $\ell^2$ with 
$\norm{\ell^2 \rightarrow \ell^2}{\mathcal M} \le \sqrt{B_\Phi \cdot B_\Psi} 
\cdot \norm{\Hil_1 \rightarrow \Hil_2}{O}$. As an operator 
$\ell^2 \rightarrow \ell^2$, we have
$$ 
{\mathcal M}^{(\Phi , \Psi)} \left( O \right) = C_{\Phi} \circ O \circ D_{\Psi}.
$$
\item On the other hand, let $M$ be an infinite matrix defining a 
bounded operator from $\ell^2$ to $\ell^2$, $\left(M c\right)_i = 
\sum_k M_{i,k} c_k$. Then, the operator $\mathcal{O}^{(\Phi , \Psi)}$ 
defined by 
$$ 
\left( \mathcal{O}^{(\Phi , \Psi)} \left( M \right)\right) h 
= \sum \limits_k  \left( \sum \limits_j M_{k,j} \left<h, \psi_j\right> \right) \phi_k\ 
\mbox{for all}\ h \in \Hil_1
$$  
is a bounded operator from $\Hil_1$ to $\Hil_2$ with 
$$
\norm{\Hil_1 \rightarrow \Hil_2}{\mathcal{O}^{(\Phi , \Psi)} \left( M \right)} 
\le \sqrt{B_\Phi \cdot B_\Psi} \norm{\ell^2 \rightarrow \ell^2}{M}
$$
and
$$ 
\mathcal{O}^{(\Phi , \Psi)} (M) = D_{\Phi} \circ M \circ C_{\Psi} 
= \sum \limits_k  \sum \limits_j M_{k,j} \cdot \phi_k \otimes_i\psi_j.
$$
\end{enumerate} 

Please note that there is a classification of matrices that are 
bounded operators from $\ell^2$ to $\ell^2$ \cite{cron71}. 

If we start out with frames, more properties can be proved \cite{xxlframoper1}: 
Let $\Psi = (\psi_k)$ be a frame in $\Hil_1$ with bounds 
$A_\Psi,B_\Psi>0$, $\Phi = (\phi_k)$ in $\Hil_2$ with $A_\Phi,B_\Phi>0$.
\begin{enumerate}
\item It holds
$$\left( {\mathcal O^{(\Phi , \Psi)} \circ M^{(\tilde{\Phi}, \tilde{\Psi})}}\right)  
= \identity{\BL(\Hil_1,\Hil_2)} = \left( {\mathcal O^{(\tilde{\Phi}, \tilde{\Psi})} \circ M^{(\Phi , \Psi)}}\right).$$
Therefore, for all $O \in \BL(\Hil_1,\Hil_2)$:
$$ O = \sum \limits_{k,j} \left<O \tilde{\psi}_j, \tilde{\phi}_k \right>  \phi_k \otimes_i\psi_j.$$
\item  $\mathcal{M}^{(\Phi , \Psi)}$ is injective and $\mathcal{O}^{(\Phi , \Psi)}$ is surjective.
\item If $\Hil_1 = \Hil_2$, then $\mathcal{O}^{(\Psi, \tilde{\Psi})} (\identity{\ell^2}) 
= \identity{\Hil_1}$. 
\item Let $\Xi = (\xi_k)$ be any frame in $\Hil_3$, and 
$O: \Hil_3 \rightarrow \Hil_2$ and $P: \Hil_1 \rightarrow \Hil_3$. Then, it holds
$$ \mathcal{M}^{(\Phi, \Psi)}\left( O \circ P \right) = \left( \mathcal{M}^{(\Phi, \Xi)}\left( O \right) \cdot \mathcal{M}^{(\tilde{\Xi}, \Psi)} \left( P \right) \right).$$
\end{enumerate}

Note that, in the Hilbert space of Hilbert-Schmidt operators, the
tensor product $\Psi \otimes \Phi := \left\{ \psi_k \otimes \psi_l \right\}_{(k,l) \in K \times K}$ is a Bessel sequence / frame 
sequence / Riesz sequence, if the starting sequences $\Psi$ and $\Phi$ are 
\cite{xxlframehs07}, with ${\mathcal M}^{(\Phi , \Psi)}$ being 
the analysis and $\mathcal O^{(\Phi , \Psi)} $ being the 
synthesis operator. This relation is even an equivalence 
\cite{bo08-1}. 

For the invertibility, it can be shown \cite{xxlrieck11,xxlriek11}: 
If and only if $O$ is bijective, then $M=\Mat^{(\Phi,\Psi)}(O)$ is 
bijective as operator from $\range{C_{\Psi}}$ onto $\range{C_{\Phi}}$. 
In this case, one has
$$
M^{\dagger}= \Mat^{(\tilde \Psi, \tilde \Phi)} 
(O^{-1})= G_{\tilde \Psi, \tilde \Phi}\circ \Mat^{(\Phi,\Psi)}\left(O^{-1}\right)
G_{\tilde \Psi, \tilde \Phi} = \Mat^{(\Psi,\Phi)}\left(S_\Psi^{-1} O^{-1} S_\Phi^{-1}\right). 
$$

If we have an operator equation $O u = b$, we use
$$ 
O u = b \Longleftrightarrow \sum \limits_k \left< u, \tilde \psi_k\right> O \psi_k = b,
$$
which implies
$$
 \sum \limits_k \left< u, \tilde \psi_k\right> \left
	< O \psi_k , \psi_l \right> = \left< b , \psi_l \right>
$$
for all $l\in K$. Setting ${\bf M} = \Mat^{(\Psi,\Psi)}(O) $, 
$\vec{u} = C_{\widetilde \Psi} u$ and $\vec{b} = C_{\Psi} b$, 
we thus have 
$$ 
O u = b \Longleftrightarrow {\bf M} \vec{u} = \vec{b}.
$$

Note that, for numerical computations, see e.g.\ \cite{dafora05,Stevenson03}, 
the system of linear equations ${\bf M} \vec{u} = \vec{b}$ is solved. Then, 
$u = D_\Psi \vec{u}$ is the solution to $O u = b$, avoiding the numerically expensive calculation of a dual frame \cite{xxlfei1,janssoend07,perxxl17}. 
If the frame is redundant, then $u_k$ can be different to $\big\langle u, 
\tilde \psi_k\big\rangle$. If a Tychonov regularization is used, we obtain
$u_k =\big\langle u, \tilde \psi_k\big\rangle$ by \cite[Prop.\ 5.1.4]{gr01}.

%============================================
\section{Stevenson Frames Revisited}\label{sec:revisited}
%============================================
As some of the references dealing with Stevenson frames 
used an unlucky formulation, when stating if or if not the Riesz 
isomorphism is used, see e.g.~\cite{dafora05,Stevenson03},
the authors decided to check everything again, 
and pay particular attention to the avoidance of the Riesz 
isomorphism, i.e.\ to not use $\Hil \cong \Hil'$.  

To {\em not} use the Riesz isomorphism in a treatment of 
Hilbert spaces is mind-boggling, so we decided to use Banach 
spaces, to be sure to avoid all pitfalls. (Note, however, that the 
Riesz isomorphism will be used on the sequence space $\ell^2$.) 
In particular, this is a generalization of the original definition. 
The used spaces are necessarily isomorphic to Hilbert 
spaces, but not Hilbert spaces per se.

%============================================
\subsection{Stevenson Banach Frames} \label{sec:StevBanach0}
%============================================
We start out with a generalized definition. (We will show that 
this is isomorphic, but {\em not\/} identical to the original definition.)
\begin{definition} \label{sec:StevBanachfram1} Let $(X,X')$ be a 
dual pair of reflexive Banach spaces. Let $\Psi = (\psi_k)_{k \in K} 
\subset X$. It is called a Stevenson Banach frame for $X$,
if there exist bounds $0 < A_\Psi \le B_\Psi < \infty$ such that
$$
A_\Psi \norm{X'}{f}^2 \le \norm{\ell^2}{\left< f , \psi_k \right>_{X',X}} 
\le B_\Psi \norm{X'}{f}^2 \text{ for all } f \in X'.
$$ 
\end{definition}

The analysis operator 
$$C_\Psi : X' \rightarrow \ell^2, \quad 
C_\Psi (f) = \left( \left< f, \psi_k \right>_{X',X} \right)_{k \in K}$$ 
is bounded by $\sqrt{B_\Psi}$ by definition. (Note that we use here the notation which is
more common for Banach spaces \cite{gr91}.) As a consequence
of the open mapping theorem, $C_\Psi$ is one-to-one and has 
closed range. 

For $d = (d_k) \in \ell^2(K)$ with finitely many non-zero 
entries, i.e.\ $d \in c^{00}$, consider
$$ 
\left< C_\Psi f , d \right>_{\ell^2} = \sum \limits_{k \in K}  
\left< f, \psi_k \right>_{X',X} d_k =
 \left< f, \sum \limits_{k \in K}  d_k \psi_k \right>_{X',X}. 
$$
By using a standard density argument and the reflexivity,
it can easily be shown that $C_\Psi^\star = D_\Psi$, where $D_\Psi: 
\ell^2 \rightarrow X$ is the synthesis operator with $D_\Psi c = 
\sum_{k \in K} c_k \psi_k$. The bound of $D_\Psi$ is also $\sqrt{B_\Psi}$.
The sum converges unconditionally. Indeed, consider $c \in \ell^2$. Then,
let $K_0 \subset K$ be a finite set, such that 
$$
\sum \limits_{k \not \in K_0} \left| c_k \right|^2 < \epsilon' := \frac{\epsilon}{\sqrt{B_\Psi}}.
$$
For another finite index set $K_1\supset K_0$, we thus find 
$$ \norm{\Hil}{\sum \limits_{k \not \in K} c_k \psi_k - \sum \limits_{k \not \in K_1} c_k \psi_k} 
= \norm{\Hil}{D_\Psi \left( c - c \cdot \chi_{K_1} \right)} < \sqrt{B_\Psi} \epsilon' = \epsilon.
$$ 
Hence, by e.g.\ \cite[IV.5.1]{wern1} and the fact that $\ell^2$ 
is a Hilbert space, we deduce
\begin{equation} \label{sec:orthcompl1} 
%============================================
\ell^2 = \range{C_\Psi} \oplus \kernel{D_\Psi}.
\end{equation}

We define the frame operator $S_\Psi = D_\Psi C_\Psi$, 
which is a mapping $S_\Psi : X' \rightarrow X$. In particular,
the operator $S_\Psi$ is self-adjoint. By definition of $S_\Psi$,
it follows that
%, the form $s$ is bounded, as 
\begin{equation} \label{sec:framsesquiA} 
%============================================
\left< S_\Psi f, g\right>_{X,X'} \le \norm{X}{S_\Psi f} \norm{X'}{g} 
\le B_\Psi \cdot \norm{X'}{f} \cdot \norm{X'}{g}. 
\end{equation}
Hence, $S_\Psi$ is bounded with bound $B_\Psi$. Furthermore,
we have
\begin{equation} \label{sec:framsesquiB}
%============================================
\left< S_\Psi f, f \right>_{X,X'} = \left< C_\Psi^\star C_\Psi f, f \right>_{X,X'} =
\left< C_\Psi f, C_\Psi f \right>_{\ell^2} = \norm{\ell^2}{C_\Psi f}^2 \ge A_\Psi \cdot \norm{X'}{f}^2,
\end{equation}
which implies that $S_\Psi$ is one-to-one and positive. By \cite[IV.5.1]{wern1},
this also means that $S_\Psi^\star = S_\Psi$ has dense range.
$S_\Psi$ also has a bounded inverse since
$$ 
\norm{X}{S_\Psi f} = \sup \limits_{\begin{smallmatrix} \norm{X'}{g} = 1 \\ 
g \in X'\end{smallmatrix}} \left< S_\Psi f, g \right>_{X,X'} 
\ge \left< S_\Psi f, \frac{f}{\norm{X'}{f}} \right>_{X,X'} \ge A_\Psi \cdot \norm{X'}{f}.
$$
Therefore, it has closed range \cite[Theorem XI.2.1]{gohbgol1}. 
Consequently, $S_\Psi$ is onto and bijective with 
$$
A_\Psi \norm{X'}{f} \le \norm{X}{S_\Psi f} \le B_\Psi \norm{X'}{f}.
$$
Thus, $S_\Psi^{-1}$ is also self-adjoint, and 
\begin{equation} \label{sec:ellcandual1}
%============================================
\frac{1}{B_\Psi} \norm{X}{g} \le \norm{X'}{S_\Psi^{-1} g} \le \frac{1}{A_\Psi} \norm{X}{g}.
\end{equation}

\begin{theorem} \label{sec:dualfram1} 
The sequence $\widetilde \Psi = \big( \tilde \psi_k\big)_{k \in K} 
:= \left( S_\Psi^{-1} \psi_k\right)_{k \in K} \subset X'$ is a Stevenson Banach 
frame for $X'$ with bounds $\frac{1}{B_\Psi}$ and $\frac{1}{A_\Psi}$. The range 
of its analysis operator coincides with the one of the primal frame, i.e.\ 
$\range{C_\Psi} = \range{C_{\widetilde \Psi}}$. The related operators are 
$C_{\widetilde \Psi} = C_\Psi S_\Psi^{-1}$, $D_{\widetilde \Psi} = S_\Psi^{-1} D_\Psi$ 
and $ S_{\widetilde \Psi} =  S_{\Psi}^{-1}$. For $f \in X$ and $g \in X'$, 
we have the reconstructions 
$$ 
f = \sum \limits_{k \in K} \left< f , \tilde \psi_k\right>_{X,X'} \psi_k 
\quad\text{and}\quad 
g =  \sum \limits_{k \in K} \left< g , \psi_k\right>_{X',X} \tilde \psi_k.
$$
\end{theorem}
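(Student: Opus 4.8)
The plan is to establish the three operator identities first, since the frame property, the range equality, and both reconstruction formulas all follow from them. I would begin by computing the analysis operator of $\widetilde\Psi$. For $g$ in the predual $X'' = X$ (reflexivity is essential here, as it is what makes $\widetilde\Psi \subset X'$ a frame \emph{for} $X'$ in the sense of Definition~\ref{sec:StevBanachfram1}), the self-adjointness of $S_\Psi^{-1}$ gives $\langle g, \tilde\psi_k\rangle_{X,X'} = \langle g, S_\Psi^{-1}\psi_k\rangle_{X,X'} = \langle S_\Psi^{-1}g, \psi_k\rangle_{X',X}$, hence $C_{\widetilde\Psi} = C_\Psi S_\Psi^{-1}$. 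Taking adjoints and using $C_\Psi^\star = D_\Psi$ together with $(S_\Psi^{-1})^\star = S_\Psi^{-1}$ yields $D_{\widetilde\Psi} = S_\Psi^{-1}D_\Psi$, and composing the two gives $S_{\widetilde\Psi} = D_{\widetilde\Psi}C_{\widetilde\Psi} = S_\Psi^{-1}D_\Psi C_\Psi S_\Psi^{-1} = S_\Psi^{-1}$. The range identity is then immediate: since $S_\Psi^{-1}\colon X \to X'$ is a bijection, $\range{C_{\widetilde\Psi}} = C_\Psi\big(S_\Psi^{-1}(X)\big) = C_\Psi(X') = \range{C_\Psi}$.

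Next I would verify the frame inequality for $\widetilde\Psi$. Using the identity $S_{\widetilde\Psi} = S_\Psi^{-1}$ and arguing as in \eqref{sec:framsesquiB}, one has $\norm{\ell^2}{C_{\widetilde\Psi}g}^2 = \langle S_\Psi^{-1}g, g\rangle_{X',X}$, so everything reduces to two-sided bounds on this quadratic form. The upper bound is routine: by the duality estimate and \eqref{sec:ellcandual1}, $\langle S_\Psi^{-1}g, g\rangle_{X',X} \le \norm{X'}{S_\Psi^{-1}g}\,\norm{X}{g} \le \frac{1}{A_\Psi}\norm{X}{g}^2$, which is the upper frame bound $\frac{1}{A_\Psi}$. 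The lower bound $\frac{1}{B_\Psi}\norm{X}{g}^2 \le \langle S_\Psi^{-1}g, g\rangle_{X',X}$ is the genuine difficulty: naively composing \eqref{sec:ellcandual1} with the frame inequality for $\Psi$ produces only the non-sharp constant $A_\Psi/B_\Psi^2$, and in a Banach space we cannot invoke spectral calculus to invert the operator inequality $A_\Psi \le S_\Psi \le B_\Psi$ directly.

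To recover the sharp constant I would introduce the symmetric, positive semidefinite bilinear form $[a,b] := \langle S_\Psi a, b\rangle_{X,X'}$ on $X' \times X'$; its symmetry is the self-adjointness of $S_\Psi$ and its positivity is \eqref{sec:framsesquiB}. Fixing $g \in X$, setting $a = S_\Psi^{-1}g$, and letting $b \in X'$ be arbitrary, one computes $[a,b] = \langle g, b\rangle_{X,X'}$, $[a,a] = \langle S_\Psi^{-1}g, g\rangle_{X',X}$, and $[b,b] \le B_\Psi\norm{X'}{b}^2$ by \eqref{sec:framsesquiA}. The Cauchy--Schwarz inequality for $[\cdot,\cdot]$ then gives $|\langle g, b\rangle_{X,X'}|^2 \le B_\Psi\,\langle S_\Psi^{-1}g, g\rangle_{X',X}\,\norm{X'}{b}^2$, and taking the supremum over $\norm{X'}{b} = 1$, where $\norm{X}{g} = \sup_{\norm{X'}{b}=1}|\langle g, b\rangle_{X,X'}|$ by Hahn--Banach, yields exactly $\norm{X}{g}^2 \le B_\Psi\,\langle S_\Psi^{-1}g, g\rangle_{X',X}$. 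I expect this Cauchy--Schwarz-plus-duality step to be the crux of the proof, as it is the single point at which the argument must genuinely replace the Hilbert-space spectral inversion.

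Finally, the reconstruction formulas fall out of the operator identities. For $g \in X'$ one has $D_{\widetilde\Psi}C_\Psi = S_\Psi^{-1}D_\Psi C_\Psi = S_\Psi^{-1}S_\Psi = \identity{X'}$, so $g = \sum_{k \in K}\langle g, \psi_k\rangle_{X',X}\,\tilde\psi_k$; for $f \in X$ one has $D_\Psi C_{\widetilde\Psi} = D_\Psi C_\Psi S_\Psi^{-1} = S_\Psi S_\Psi^{-1} = \identity{X}$, so $f = \sum_{k \in K}\langle f, \tilde\psi_k\rangle_{X,X'}\,\psi_k$, the convergence being unconditional by the property of $D_\Psi$ already established before the theorem. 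Throughout, I would track carefully which pairing, $\langle\cdot,\cdot\rangle_{X,X'}$ or $\langle\cdot,\cdot\rangle_{X',X}$, appears at each step, since the whole purpose is to confirm that no Riesz identification $X \cong X'$ is silently invoked.
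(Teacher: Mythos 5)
Your proposal is correct and follows essentially the same route as the paper's proof: the operator identities come from the self-adjointness of $S_\Psi^{-1}$ and taking adjoints, and the crux --- the sharp lower bound $\frac{1}{B_\Psi}$ via the Cauchy--Schwarz inequality for the positive sesqui-linear form associated with $S_\Psi$ (equivalently $S_\Psi^{-1}$, as the paper writes it) combined with the duality characterization of $\norm{X}{\cdot}$ --- is exactly the paper's argument. The only difference is cosmetic: you derive the operator identities first and go straight for the sharp bounds, whereas the paper first establishes the frame property with the non-sharp constants $\frac{A_\Psi}{B_\Psi^2}$ and $\frac{B_\Psi}{A_\Psi^2}$ and sharpens them at the end.
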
 

\begin{proof}
It obviously holds $S_\Psi^{-1} \psi_k \in  X'$. Moreover, we have
on the one hand
\begin{align*}\sum \limits_{k \in K} \left| \left< f , \tilde \psi_k  \right>_{X,X'} \right|^2 
&= \sum \limits_{k \in K} \left| \left< f , S_\Psi^{-1} \psi_k  \right>_{X,X'} \right|^2 =
\sum \limits_{k \in K} \left| \left< S_\Psi^{-1} f ,  \psi_k  \right>_{X',X} \right|^2\\
&\le B_\Psi \norm{X'}{S_\Psi^{-1} f}^2 \le \frac{B_\Psi}{A_\Psi^2} \norm{X}{f}^2
\end{align*}
and on the other hand
$$ 
\sum \limits_{k \in K} \left| \left< f , \tilde \psi_k  \right>_{X,X'} \right|^2 
\ge A_\Psi \norm{X'}{S_\Psi^{-1} f}^2 \ge \frac{A_\Psi}{B_\Psi^2} \norm{X'}{f}^2.
$$
Hence, $\widetilde \Psi$ is an $X'$-frame.
By employing the invertibility of $S_\Psi$ for $g = S_\Psi^{-1} f$, we get
$$ \left< f , S_\Psi^{-1} \psi_k \right>_{X,X'} 
= \left< S_\Psi g , S_\Psi^{-1} \psi_k \right>_{X,X'}
= \left< g , S_\Psi S_\Psi^{-1} \psi_k \right>_{X',X} 
= \left< g , \psi_k \right>_{X',X}.$$
This implies $\range{C_\Psi} = \range{C_{\widetilde \Psi}}$,
where $C_{\widetilde \Psi} = C_\Psi S_\Psi^{-1}$ and $D_{\widetilde \Psi} 
= S_\Psi^{-1} D_\Psi$. Furthermore, $S_{\widetilde \Psi} = S_\Psi^{-1}:X\rightarrow X'$, 
because it holds
\begin{align*}
  S_{\widetilde \Psi} f 
  &= \sum \limits_k \left< f , S_\Psi^{-1} \psi_k \right>_{X,X'} S_\Psi^{-1} \psi_k\\
  &= S_\Psi^{-1} \sum \limits_k \left< S_\Psi^{-1} f , \psi_k \right>_{X,X'}  \psi_k 
  = S_\Psi^{-1} S_\Psi S_\Psi^{-1}f = S_\Psi^{-1}f
\end{align*}
for all $f\in X$.

Finally, we have the reconstructions 
\[
f = D_{\Psi} C_{\Psi} S_{\Psi}^{-1} f 
  = D_{\Psi} C_{\widetilde \Psi} f 
  = \sum \limits_{k \in K} \left< f , \tilde \psi_k\right>_{X,X'} \psi_k
\]
for all $f\in X$ and
\[
g = S_{\Psi}^{-1} D_{\Psi} C_{\Psi}   
   = D_{\widetilde \Psi} C_\Psi g 
   = \sum \limits_{k \in K} \left< h , \psi_k\right>_{X',X}\tilde \psi_k
\]   
for all $g \in X'$.

As 
$$
\left< S_\Psi^{-1} x,x \right>_{X',X} \leq \norm{X'}{S_\Psi^{-1} x} 
\norm{X}{x} \leq \frac{1}{A_\Psi} \norm{X}{x}^2, 
$$
we have the sharper upper bound. On the other hand, since
$\left< S_\Psi^{-1}\cdot,\cdot\right>$ defines a positive sesqui-linear 
form, the Cauchy-Schwarz inequality implies
$$
\left| \left< S_\Psi^{-1} x,y \right>_{X',X} \right|^2 
\le \left< S_\Psi^{-1} x,x \right>_{X',X}  \left< S_\Psi^{-1} y,y \right>_{X',X}.
$$
Thus, with $x = S_\Psi u$, there holds
$$
\left| \left< u,y \right>_{X',X} \right|^2 
\le \left< u, S_\Psi u \right>_{X',X}  \left< S_\Psi^{-1} y,y \right>_{X',X}
$$ 
and consequently
$$ 
\norm{X}{y}^2 = \sup \limits_{\begin{smallmatrix} \norm{X'}{u} = 1 \\ 
u\in X'\end{smallmatrix}} \left| \left< u,y \right>_{X',X} \right|^2 
\le B_\Psi  \left< S_\Psi^{-1} y,y \right>_{X',X}.
$$
So, the sharper bounds $\frac{1}{B_\Psi}$ and $\frac{1}{A_\Psi}$ follow.
\end{proof}

The fact that $\range{C_\Psi} = \range{C_{\widetilde \Psi}}$ 
is very different to the Gelfand frame setting, where the ranges 
$\range{{C_\Psi}_{|_{X'}}} \not= \range{{C_{\widetilde \Psi}}_{|_{X}}}$ 
even live in different sequence spaces.

\begin{theorem} The cross-Gramian matrix $G_{\Psi,\widetilde \Psi} 
= C_\Psi D_{\widetilde \Psi}$ is the orthogonal projection on 
$\range{C_\Psi}$ and coincides with $G_{\widetilde \Psi, \Psi}$. 
\end{theorem}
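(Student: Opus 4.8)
The plan is to set $P := G_{\Psi,\widetilde \Psi} = C_\Psi D_{\widetilde \Psi}$ and to reduce everything to the composition identities for the canonical dual that were established in Theorem~\ref{sec:dualfram1}. Substituting $D_{\widetilde \Psi} = S_\Psi^{-1} D_\Psi$ gives $P = C_\Psi S_\Psi^{-1} D_\Psi$, while substituting $C_{\widetilde \Psi} = C_\Psi S_\Psi^{-1}$ into $G_{\widetilde \Psi, \Psi} = C_{\widetilde \Psi} D_\Psi$ produces the same expression $C_\Psi S_\Psi^{-1} D_\Psi$. Hence the coincidence $G_{\Psi,\widetilde \Psi} = G_{\widetilde \Psi, \Psi}$ is immediate, and it remains only to analyse the single operator $P = C_\Psi S_\Psi^{-1} D_\Psi$ acting on $\ell^2$.

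For idempotency I would use the defining relation $D_\Psi C_\Psi = S_\Psi$, which yields $P^2 = C_\Psi S_\Psi^{-1} (D_\Psi C_\Psi) S_\Psi^{-1} D_\Psi = C_\Psi S_\Psi^{-1} S_\Psi S_\Psi^{-1} D_\Psi = P$. For self-adjointness on $\ell^2$ I would compute $P^\star = D_\Psi^\star (S_\Psi^{-1})^\star C_\Psi^\star$ and insert the three facts already available: $C_\Psi^\star = D_\Psi$, its consequence $D_\Psi^\star = C_\Psi$ (by reflexivity of the pair $(X,X')$), and the self-adjointness $(S_\Psi^{-1})^\star = S_\Psi^{-1}$ proved just before the theorem. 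This gives $P^\star = C_\Psi S_\Psi^{-1} D_\Psi = P$, so $P$ is a self-adjoint idempotent, i.e.\ an orthogonal projection.

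To identify the range, note first that $P = C_\Psi S_\Psi^{-1} D_\Psi$ factors through $C_\Psi$ on the left, so $\range{P} \subseteq \range{C_\Psi}$. For the reverse inclusion I would again use $D_\Psi C_\Psi = S_\Psi$ to obtain $P C_\Psi = C_\Psi S_\Psi^{-1} S_\Psi = C_\Psi$; thus every element of $\range{C_\Psi}$ is fixed by $P$ and therefore lies in $\range{P}$. Since $\range{C_\Psi}$ is closed (as recorded after Definition~\ref{sec:StevBanachfram1}), we conclude that $P$ is precisely the orthogonal projection of $\ell^2$ onto $\range{C_\Psi}$.

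I expect the only genuinely delicate point to be the adjoint bookkeeping in the second paragraph: one must make sure that all adjoints are taken in the Hilbert space $\ell^2$, or are the previously established Banach-space adjoints $C_\Psi^\star = D_\Psi$ and $(S_\Psi^{-1})^\star = S_\Psi^{-1}$, so that no Riesz identification between $X$ and $X'$ is silently used. The reflexivity of $(X,X')$ is exactly what legitimises $D_\Psi^\star = C_\Psi$, and the self-adjointness of $S_\Psi^{-1}$ carries the weight of the argument; once these are in place, every remaining step is a routine cancellation through $D_\Psi C_\Psi = S_\Psi$.
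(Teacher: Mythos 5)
Your proof is correct and follows essentially the same strategy as the paper's: show that $G_{\Psi,\widetilde\Psi}$ is an idempotent, self-adjoint operator on $\ell^2$. Two differences in execution are worth noting. For self-adjointness the paper argues entry-wise, computing $\big(G_{\Psi,\widetilde\Psi}\big)_{k,l} = \left< S_\Psi^{-1}\psi_l,\psi_k\right>_{X',X} = \left<\psi_l,S_\Psi^{-1}\psi_k\right>_{X,X'} = \big(G_{\widetilde\Psi,\Psi}\big)_{k,l}$ and combining this with $G_{\Psi,\widetilde\Psi}^\star = C_{\widetilde\Psi}D_\Psi = G_{\widetilde\Psi,\Psi}$; you instead work purely at the operator level, substituting $C_{\widetilde\Psi}=C_\Psi S_\Psi^{-1}$ and $D_{\widetilde\Psi}=S_\Psi^{-1}D_\Psi$ and using $C_\Psi^\star=D_\Psi$, $D_\Psi^\star=C_\Psi$ (via reflexivity) and $(S_\Psi^{-1})^\star=S_\Psi^{-1}$. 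Both routes hinge on the self-adjointness of $S_\Psi^{-1}$ and both stay within the $(X,X')$ duality without invoking a Riesz identification, so this is a matter of bookkeeping rather than substance. More notably, you explicitly identify the range: the paper's proof establishes idempotency and self-adjointness but never actually verifies that the projection is \emph{onto} $\range{C_\Psi}$, whereas your observation that $P$ factors through $C_\Psi$ on the left together with $PC_\Psi = C_\Psi S_\Psi^{-1}S_\Psi = C_\Psi$ supplies exactly that missing step (using the closedness of $\range{C_\Psi}$ recorded earlier). In this respect your write-up is more complete than the published one.
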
 
\begin{proof} 
We have that the cross-Gramian matrix of a frame and its dual is a projection:
$$\left(G_{\Psi,\widetilde \Psi}\right)^2  
=  C_\Psi D_{\widetilde \Psi}  C_\Psi D_{\widetilde \Psi} 
=  C_\Psi D_{\widetilde \Psi}.= G_{\Psi, \widetilde \Psi}.$$
Next, it holds
$$G_{\Psi,\widetilde \Psi}^\star = \left( C_\Psi D_{\widetilde \Psi} \right)^\star 
= C_{\widetilde \Psi}  D_\Psi = G_{\widetilde \Psi,\Psi}.$$
In addition, since
$$ {\Big( G_{\Psi,\widetilde \Psi}\Big)}_{k,l} = \left< \tilde \psi_l, \psi_k\right>_{X',X} 
= \left< S_\Psi^{-1} \psi_l, \psi_k\right>_{X',X} =  \left< \psi_l, S_\Psi^{-1} \psi_k\right>_{X,X'} 
= {\Big( G_{\widetilde  \Psi,\Psi}\Big)}_{k,l}, $$
we conclude $G_{\Psi,\widetilde \Psi} = G_{\widetilde \Psi,\Psi}$.
Thus, $G_{\Psi,\widetilde \Psi}$ is self-adjoint.
\end{proof}

\begin{theorem}
The collection $\Psi$ is a Stevenson Banach frame for $X$ 
with bounds $A_\Psi$ and $B_\Psi$ if and only if
$$ 
\frac{1}{B_\Psi} \norm{X}{f} 
\le \inf \limits_{d \in \ell^2,\,D_\Psi d = f} \norm{\ell^2}{d} 
\le \frac{1}{A_\Psi} \norm{X}{f}.
$$
In particular, for any $f \in X $ with $f = \sum_{k \in K} d_k \psi_k$ 
and $d= (d_k) \in \ell^2$, we have $\norm{\ell^2}{d} \ge 
\norm{\ell^2}{C_{\widetilde \Psi} f}$.
\end{theorem}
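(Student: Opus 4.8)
The plan is to prove the equivalence by identifying the infimum on the left-hand side with the $\ell^2$-norm of the canonical dual coefficients $C_{\widetilde\Psi}f = \left(\left< f,\tilde\psi_k\right>_{X,X'}\right)_{k\in K}$, and then reading off the two-sided estimate from the dual frame bounds. For the forward implication, assume $\Psi$ is a Stevenson Banach frame. By Theorem~\ref{sec:dualfram1} every $f\in X$ is reconstructed as $f = \sum_k \left< f,\tilde\psi_k\right>_{X,X'}\psi_k = D_\Psi C_{\widetilde\Psi}f$, so $C_{\widetilde\Psi}f$ lies in the affine set $\{d\in\ell^2 : D_\Psi d = f\}$. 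I would then note that any other representer $d$ of $f$ satisfies $d - C_{\widetilde\Psi}f \in \kernel{D_\Psi}$, whereas $C_{\widetilde\Psi}f \in \range{C_{\widetilde\Psi}} = \range{C_\Psi} = \kernel{D_\Psi}^\perp$, using both the range identity of Theorem~\ref{sec:dualfram1} and the orthogonal splitting \eqref{sec:orthcompl1}. Pythagoras in the Hilbert space $\ell^2$ gives $\norm{\ell^2}{d}^2 = \norm{\ell^2}{C_{\widetilde\Psi}f}^2 + \norm{\ell^2}{d - C_{\widetilde\Psi}f}^2 \ge \norm{\ell^2}{C_{\widetilde\Psi}f}^2$, which is exactly the ``in particular'' assertion and shows the infimum is attained at $C_{\widetilde\Psi}f$ with value $\norm{\ell^2}{C_{\widetilde\Psi}f}$. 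The two-sided bound then follows from $\norm{\ell^2}{C_{\widetilde\Psi}f}^2 = \left< S_\Psi^{-1}f,f\right>_{X',X}$ combined with the estimates $\frac{1}{B_\Psi}\norm{X}{f}^2 \le \left< S_\Psi^{-1}f,f\right>_{X',X} \le \frac{1}{A_\Psi}\norm{X}{f}^2$ recorded at the end of the proof of Theorem~\ref{sec:dualfram1}.

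For the converse I would argue elementarily from the adjoint identity $C_\Psi^\star = D_\Psi$ and the self-duality of $\ell^2$, \emph{without} reinvoking the frame hypothesis. Finiteness of the infimum for every $f\in X$ forces $D_\Psi$ to be onto $X$. For the upper bound, the left inequality gives $\norm{X}{D_\Psi d} \le B_\Psi \norm{\ell^2}{d}$ for every $d$ (take $f = D_\Psi d$), whence $\norm{\ell^2}{C_\Psi g} = \sup_{\norm{\ell^2}{d}\le 1}\left|\left< C_\Psi g, d\right>_{\ell^2}\right| = \sup_{\norm{\ell^2}{d}\le 1}\left|\left< g, D_\Psi d\right>_{X',X}\right| \le B_\Psi\norm{X'}{g}$. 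For the lower bound I would write $\norm{X'}{g} = \sup_{\norm{X}{f}\le 1}\left|\left< g,f\right>_{X',X}\right|$ and, for each admissible $f$, choose a representer $d$ with $D_\Psi d = f$ and $\norm{\ell^2}{d}$ arbitrarily close to $\frac{1}{A_\Psi}\norm{X}{f}$; then $\left|\left< g,f\right>_{X',X}\right| = \left|\left< C_\Psi g, d\right>_{\ell^2}\right| \le \norm{\ell^2}{C_\Psi g}\cdot\frac{1}{A_\Psi}$, and passing to the supremum yields $A_\Psi\norm{X'}{g} \le \norm{\ell^2}{C_\Psi g}$. Together these are the two Stevenson frame inequalities of Definition~\ref{sec:StevBanachfram1}.

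The conceptual heart, more than a genuine difficulty, is that the Hilbert structure of $\ell^2$, through the orthogonality in \eqref{sec:orthcompl1}, is what singles out $C_{\widetilde\Psi}f$ as the minimizer even though $X$ and $X'$ carry only Banach norms. The two care-points I expect are: first, ensuring the minimal-norm representer exists, for which I would observe that the solution set $\{d : D_\Psi d = f\}$ is a nonempty closed affine subspace of $\ell^2$ (so the projection of the origin works), or simply pass to the limit along near-optimal $d$; and second, the bookkeeping of the two bounds, since the passage between the synthesis inequality and the squared analysis inequality of Definition~\ref{sec:StevBanachfram1} introduces the usual square-root normalization that must be tracked consistently on both sides of the equivalence.
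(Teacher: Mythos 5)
Your proposal is correct, and it actually proves more than the paper does: the paper's own proof consists solely of the argument you give for the ``in particular'' clause --- writing $d - C_{\widetilde\Psi}f \in \kernel{D_\Psi}$, invoking $\range{C_{\widetilde\Psi}} = \range{C_\Psi}$ from Theorem~\ref{sec:dualfram1} together with the splitting \eqref{sec:orthcompl1}, and concluding $\norm{\ell^2}{d}\ge\norm{\ell^2}{C_{\widetilde\Psi}f}$ --- and then stops; the two-sided estimate and the converse implication are never established there. Your identification of the infimum's value as $\norm{\ell^2}{C_{\widetilde\Psi}f} = \sqrt{\left< S_\Psi^{-1}f,f\right>_{X',X}}$ combined with the quadratic-form bounds recorded at the end of the proof of Theorem~\ref{sec:dualfram1}, and your converse via $C_\Psi^\star = D_\Psi$ and the duality $\norm{X'}{g} = \sup_{\norm{X}{f}\le 1}\left|\left< g,f\right>_{X',X}\right|$, are both sound and fill genuine gaps; the only polish I would add is to take the supremum defining $\norm{\ell^2}{C_\Psi g}$ over \emph{finitely supported} $d$ first, so that membership of $C_\Psi g$ in $\ell^2$ is a conclusion rather than an assumption, and to note that the upper bound must be obtained before the lower one so that $D_\Psi$ is known to be bounded on all of $\ell^2$ when you pair $C_\Psi g$ with a general representer $d$. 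The square-root bookkeeping you flag is a real discrepancy, but it originates in the paper: Definition~\ref{sec:StevBanachfram1} pairs the squared norms $\norm{X'}{f}^2$ with the unsquared $\norm{\ell^2}{\cdot}$, so the constants $\frac{1}{B_\Psi}$ and $\frac{1}{A_\Psi}$ in the theorem statement versus the $\frac{1}{\sqrt{B_\Psi}}$ and $\frac{1}{\sqrt{A_\Psi}}$ your computation yields cannot be reconciled until that definition is normalized; your version is the one consistent with the standard squared form \eqref{sec:frambasdef1}.
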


\begin{proof} 
Given 
$$
f = \sum_{k \in K} d_k \psi_k\in X,
$$
we have the representation 
$$
f = \sum_{k \in K} \left< f, \tilde \psi_k \right>_{X,X'} \psi_k.
$$ 
Hence, 
$$
\left(d_k - \left< f, \tilde \psi_k \right>_{X,X'}\right)\in\kernel{D_\Psi}.
$$
By \eqref{sec:orthcompl1} and Theorem \ref{sec:dualfram1}, there 
follows $\norm{\ell^2}{d} \ge \norm{\ell^2}{C_{\widetilde \Psi} f}$.
\end{proof}

Consequently, a Stevenson frame is a Riesz basis for $X$ if and only 
if $D_\Psi$ is one-to-one.

%============================================
\subsection{Is $X'$ a Hilbert Space?}
%============================================
Set $\left< u , v \right>_{X'_{\Hil}} :=  \left< u, S_\Psi v \right>_{X', X}$. This 
is, trivially, a symmetric and positive bilinear form by above and, therefore, 
an inner product on $X'$. Hence, $X'$ is a pre-Hilbert space with this inner 
product. By \eqref{sec:framsesquiA} and \eqref{sec:framsesquiB}, the 
corresponding norm is equivalent to the original one as $\sqrt{A_\Psi} \norm{X'}{f}\le 
\norm{X'_{\Hil}}{f} \le \sqrt{B_\Psi} \norm{X'}{f}$. Thus, $(X', \left<\cdot,\cdot\right>)$
is a Hilbert space. 

Note that, in particular for numerics, it is sometimes not enough 
to consider equivalent norms. While well-posed problems stay 
well-posed for equivalent norms, this becomes important for 
concrete implementations, as things like condition numbers, 
constants in convergence rates, etc.\ are considered. 

From a frame theory perspective, switching to an equivalent 
norm can destroy or create tightness, in particular, the switch 
from one norm to the other changes the frame bound ratio 
$\frac{B_\Psi}{A_\Psi}$. We refer the reader to e.g.\ weighted and controlled frames 
\cite{xxljpa1}, which are under very mild conditions equivalent to classical 
Hilbert frames. Nonetheless, they have applications for example in the 
implementation of wavelets on the sphere \cite{bogdvan1,jacques-th}, 
and nowadays become important for the scaling of frames \cite{Casazza2017,kutoko13}. 
As a trivial example, look at $\Psi := \left\{ e_1, e_1, e_2, e_2, e_3, e_3,\dots \right\}$, 
where $E = \left\{ e_i \right\}_{i\in\NN}$ is an orthonormal basis for $\Hil$. 
Then, $\Psi$ is a tight frame with $A_\Psi = 2$. Looking at the reweighted 
version $\Phi := \left\{2 e_1, 2 e_1, e_2/2, e_2/2, 2 e_3, 2 e_3, \dots \right\}$, 
we loose tightness, since this frame has bounds $A=1$ and $B=4$. Note that 
there exists an invertible bounded operator that maps the single 
elements from $\Psi$ into $\Phi$, i.e they are equivalent sequences \cite{Casaz1}.

Also note that, if it does not make sense to assume that $X\subset X'$, then $\Psi$ 
cannot be a Hilbert space frame per se. This can only be true for $\Psi' := 
( \psi'_k) = (I \psi_k)$, where $I$ is an isomorphism from $X$ to $X'$, 
for example, choosing $I = S_\Psi^{-1}$. In this case, the frame bounds 
are preserved, but the roles of primal and dual frames interchange.

This especially means that, if the frame bound ratio is important, 
distinguishing $\ell^2$-Banach frames from Hilbert frames is necessary, 
especially if concrete examples for $X$ and $X'$ are used, 
where an identification is not possible, i.e.\ $X \not=X'$. 
As such, Definition \ref{sec:StevBanach0} is, of course, 
equivalent to the standard frame definition for Hilbert spaces, 
but {\em the frame bound ratio changes}.

We like to remark that, by using the dual frame, one can 
also conclude that $X$ itself is a Hilbert space.

%============================================
\subsection{Matrix Representation}
%============================================
Let us also revisit the statements about the matrix representation 
of operators \cite{harbr08,Stevenson03}. To this end, let $\Psi$ 
be Stevenson Banach frame for $X$.

Let us now consider an operator $O : X \rightarrow X'$ and 
define 
$$
{\left( {\Mat}^{(\Psi)} \left( O \right) \right)}_{m,n} = 
\left<O \psi_n, \psi_m \right>_{X',X}.
$$ 
Then, ${\Mat}^{(\Psi)} \left( O \right) =  
C_{\Psi} O  D_{\Psi}$, which implies
$$
\norm{\ell^2 \rightarrow \ell^2}{{\Mat}^{(\Psi)} \left( O \right)} 
\le B_\Psi \norm{X\rightarrow X'}{O}. 
$$ 
(As in Section \ref{sec:matrrep0}, we could consider different 
sequences, and the arguments would still work, but following 
the argument in the introduction and for easy reading we will not.)

For an invertible operator $O$, we have
$$
\Mat^{(\widetilde \Psi)} (O^{-1})  \Mat^{(\Psi)} (O) 
= C_{\widetilde \Psi}  O^{-1}  D_{\widetilde \Psi} C_{\Psi} O  D_{\Psi} 
= G_{\widetilde \Psi,\Psi}.
$$ 
(For the analogue result in the Hilbert frame case, see \cite{xxlrieck11,xxlriek11}.)
Equivalently, 
$$
\Mat^{(\Psi)} (O) \Mat^{(\widetilde \Psi)}(O^{-1}) = G_{\widetilde \Psi, \Psi}.
$$
Therefore, as $G_{\widetilde\Psi, \Psi}$ is the orthogonal projection on $\range{C_\Psi}$ the operator
$\Mat^{(\Psi)} (O)_{|_{\range{C_\Psi}}}$ is boundedly invertible, 
as 
$$
\norm{\range{C_\Psi} \rightarrow \range{C_\Psi}}{ \Mat^{(\Psi)} (O)}
\ge A_\Psi\norm{X'\rightarrow X}{O^{-1}}^{-1}.
$$
Furthermore, 
$$
\kernel{\Mat^{(\widetilde \Psi)} (O)} = \kernel{D_\Psi}.
$$ 
If $O$ is symmetric, then ${\Mat}^{(\Psi)} \left( O \right)$ is 
symmetric. If $O$ is non-negative, so is ${\Mat}^{(\Psi)} \left( O \right)$. 
In particular, we have now have settled all statements in \cite{harbr08,Stevenson03}.

%============================================
\section*{Acknowledgment}
%============================================
This research was supported by the START project FLAME Y551-N13 
of the Austrian Science Fund (FWF) and the DACH project BIOTOP 
I-1018-N25 of the Austrian Science Fund (FWF) and 200021E-142224
of the Swiss National Science Foundation (SNSF). 

The authors like to thank Stephan Dahlke, Wolfgang Kreuzer, 
and Diana Stoeva for fruitful discussions.

%============================================
\bibliographystyle{amsplain}
%\bibliography{../../biblioall}

\providecommand{\bysame}{\leavevmode\hbox to3em{\hrulefill}\thinspace}
\providecommand{\MR}{\relax\ifhmode\unskip\space\fi MR }
% \MRhref is called by the amsart/book/proc definition of \MR.
\providecommand{\MRhref}[2]{%
  \href{http://www.ams.org/mathscinet-getitem?mr=#1}{#2}
}
\providecommand{\href}[2]{#2}

\end{document}